\newtheorem{thm}{Theorem}[section]
\newtheorem{cor}[thm]{Corollary}
\newtheorem{lem}[thm]{Lemma}
\newtheorem{pro}[thm]{Proposition}
\numberwithin{equation}{section}
\theoremstyle{definition}
\newtheorem*{ack}{Acknowledgements}
\DeclareMathOperator{\sect}{sec}
\DeclareMathOperator{\Ric}{Ric}
\DeclareMathOperator{\diam}{diam}
\DeclareMathOperator{\con}{Con}
\newcommand{\rcd}[2]{\mathsf{RCD}(#1,#2)}
\newcommand{\norm}[1]{\left\Vert#1\right\Vert}
\newcommand{\abs}[1]{\left\vert#1\right\vert}
\newcommand{\inner}[2]{\left\langle \nabla #1, \nabla #2\right\rangle}
\renewcommand{\d}{\mathrm{d}}
\begin{document}


\title[Behavior of harmonic functions at singular points]{The behavior of harmonic functions at singular points of  $\mathsf{RCD}$ spaces.}

\author[G. De Philippis]{Guido De Philippis}
\address{G.D.P.: SISSA, Via Bonomea 265, 34136 Trieste, Italy}
\email{guido.dephilippis@sissa.it}

\author[J.~N\'u\~nez-Zimbr\'on]{Jes\'us~~N\'u\~nez-Zimbr\'on}
\address{J.~N.-Z.: Centro de Ciencias Matem\'aticas UNAM, Antigua Carretera a P\'atzcuaro 8701, 58089 Morelia, Michoac\'an, Mexico}
\email{zimbron@matmor.unam.mx}


\begin{abstract}
In this note we investigate the behavior of harmonic functions at singular points of $\mathsf{RCD}(K,N)$ spaces. In particular we show that their gradient  vanishes at all points where the tangent cone is  isometric to a cone over a metric measure space with non-maximal diameter. The same analysis is performed for functions with Laplacian in $L^{N+\varepsilon}$. As a consequence we show that on smooth manifolds there is no a priori estimate on the  modulus of continuity of the gradient of harmonic functions which depends only on lower bounds of the sectional curvature. In the same way we show that there is no a priori  Calder\'on-Zygmund theory for the Laplacian with bounds depending only on lower bounds of the sectional curvature.

\end{abstract}

\maketitle


\section{Introduction}

\subsection*{Background}

One of the main themes of research in Geometric Analysis consists in establishing \emph{a priori} estimates for solutions of PDE that depend only on the geometric data of the problem (upper/lower bounds on curvature, upper bounds on the diameter, upper bounds on the dimension, etc), see for instance \cite{PLi} for a (non-complete) account of the theory. Among the various results which can be obtained let us mention the following ones:

\begin{itemize}
\item[-]\emph{Lipschitz regularity of harmonic functions}: If \(M\) is an \(N\)-dimensional Riemannian manifold with \(\Ric\ge K\), then every harmonic function is locally Lipschitz. More precisely, there exists a constant \(C=C(K,N)\) such that 
\begin{equation}\label{e:lipharmint}
\sup_{B_{R/2}(x_0)} |\nabla u|^2 \leq C \fint_{B_{R}(x_0)} |\nabla u|^2. 
\end{equation}
for all functions \(u\) such that \(\Delta u=0\) on \(B_{R}(x_0)\).

\item[-] \emph{Calder\'on-Zygmund theory}:  If \(M\) is an \(N\)-dimensional Riemannian manifold with \(\Ric\ge K\), then there exists a constant  \(C=C(K)\) such that  
\begin{equation}\label{e:czint}
\norm{\nabla^2 u}_{L^2}\le C(\norm{\Delta u}_{L^2}+\norm{\nabla u}_{L^2}).
\end{equation}
for all functions \(u\in C_c^\infty(M)\).
\end{itemize}

Besides their intrinsic interest, these inequalities  can be used to establish various properties of Gromov-Hausdorff limits of manifolds uniformly satisfying the above bounds. Indeed, the Gromov pre-compactness theorem ensures that the class of manifolds with Ricci curvature bounded from below by \(K\), dimension bounded above by \(N\) and diameter bounded above by \(D\) is pre-compact with respect to Gromov-Hausdorff convergence. The above estimates, when suitably interpreted, can be passed to the limit and thus they hold true on the so-called \emph{Ricci limit} spaces studied by Cheeger and Colding, \cite{Cheeger-Colding96, Cheeger-Colding97I, Cheeger-Colding97II, Cheeger-Colding97III, Colding96, Colding96b, Colding97}.

A different point of view consists in establishing a \emph{synthetic} notion of lower curvature bounds which is valid for arbitrary metric spaces and that is stable under Gromov-Hausdorff convergence and to study directly which implications these bounds have on the geometry of the spaces. In particular, whether inequalities in the spirit of \eqref{e:lipharmint} and \eqref{e:czint} are valid (again suitably interpreted). In the case of lower bounds on sectional curvature, this idea led to the theory of Alexandrov spaces, \cite{BuragoBuragoIvanov, BuragoGromovPerelman}.

At the beginning of the 2000's, Lott, Sturm and Villani introduced the notion of   \(\mathsf{CD}(K,N)\) spaces which provided a synthetic notion of Ricci curvature bounded below by \(K\) and dimension bounded above by \(N\). This class of spaces is closed with respect to (measure) Gromov-Hausdorff convergence and, in particular, contains all Ricci limit spaces and all Alexandrov spaces, \cite{LotVil, Stu1, Stu2}. However, these spaces include also  Finsler manifolds, \cite{Ohta}. In order to obtain the analogues of \eqref{e:lipharmint} and \eqref{e:czint} one thus has to:

\begin{enumerate}[label=(\alph*)]
\item Find a suitable restriction on the class of spaces which ensures the validity of a Bochner inequality.
\item Develop a suitable calculus to give a meaning to the objects appearing in the above inequality. 
\end{enumerate}

In the seminal works   \cite{AmbrosioGigliSavare11-2, GigDiffStruc} as a restriction in item  (a) above it has been proposed to impose the \emph{infinitesimal Hilbertianity} of the space, that is, that the Sobolev space \(W^{1,2}\) is Hilbert, leading to the notion of \(\rcd{K}{N}\) spaces. When infinitesimal Hilbertianity is coupled with a lower bound on the Ricci curvature, it is  stable under measure Gromov-Hausdorff convergence  and  rules out Finsler geometries. In particular it has been shown to imply the validity of a Bochner type inequality, \cite{EKS}. Parallel to this a suitable first and second order  calculus on metric spaces has been developed in great detail, \cite{Gigli14, GigDiffStruc}. In particular    \eqref{e:lipharmint} and \eqref{e:czint} have been shown to be true on \(\rcd{K}{N}\) spaces, \cite{GigDiffStruc,Kell}. We refer the reader to the recent survey \cite{AmbrosioICM} for  a detailed account of the main results in the field.

\subsection*{Main results} In this note we aim to understand  if it is possible to improve upon  \eqref{e:lipharmint}  and \eqref{e:czint}. In particular we aim to answer to the following questions:

\begin{enumerate}[label=(Q.\arabic*)]
\item \emph{Is it possible to find a modulus of continuity for the gradient of harmonic functions that depends only on a lower bound on the Ricci curvature and an upper bound of the dimension?}\label{q:Q1}
\item \emph{Is it possible to  obtain the validity of \eqref{e:czint} for some \(p\ne 2\)?}\label{q:Q2}
\end{enumerate}

We actually show that the answer to both questions is indeed negative, also if one assumes a lower bound on sectional curvature. Our  proof  relies on the aforementioned theory of \(\rcd{K}{N}\) spaces and our main result studies the behavior of the gradient of harmonic functions at singular points of these spaces, a result which is  interesting on its own. In order to state our main result  let us recall that for any given metric measure space $(X,d,m)$ satisfying the $\rcd{K}{N}$ condition for $K\in \mathbb{R}$, $N\in [1,\infty)$ the following holds true: For any point $x\in X$  the  \emph{Bishop-Gromov density} 
\begin{equation}\label{e:bg}
\vartheta(x)=\lim_{r\to0}\frac{m(B_R(x))}{R^N}\in (0,+\infty]
\end{equation}
exists. Moreover if \(\vartheta(x)<+\infty\) every tangent cone at $x$ is an $N$-metric measure cone, \cite{DePGigNonCol}. If its base space  has diameter $<\pi$ then the cone is singular. We will  call such a cone,  \textit{sharp}.

\begin{thm}
\label{thm-mwug-vanishes}
 Let $(X,d,m)$ be an $\mathsf{RCD}(K,N)$ space and $u\in D(\Delta)$ with $\Delta u\in L^{p}(X,m)$ for some $p>N$. Then 
  \[
 \lim_{R \to 0} \fint_{B_R(x)}|\nabla u|^2=0
 \]
at any point $x\in X$ with finite Bishop-Gromov density such that every tangent cone is sharp.  
\end{thm}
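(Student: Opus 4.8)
The plan is to argue by contradiction using a blow-up (tangent cone) analysis, exploiting the fact that the normalized Dirichlet energy $\fint_{B_R(x)}|\nabla u|^2$ is monotone-like under rescaling when $\Delta u$ is small, and that on a sharp cone any harmonic function with vanishing gradient at the vertex must be constant in the radial direction. First I would set up the rescaled spaces $(X, d/r, m/m(B_r(x)), x)$ which, by the structure theory for $\rcd{K}{N}$ spaces with finite Bishop-Gromov density \cite{DePGigNonCol}, converge (along a subsequence $r_i \to 0$) in the pointed measured Gromov-Hausdorff sense to a tangent cone $(C(Z), d_C, m_C, o)$ which by hypothesis is an $N$-metric measure cone over a base $(Z, d_Z, m_Z)$ with $\diam(Z) < \pi$. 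The key preliminary point is that the $L^p$ bound on $\Delta u$ with $p > N$ is subcritical: after rescaling, $\Delta (u \circ \text{scaling})$ scales like $r^2 \cdot (\text{rescaled density factor})$, and since $p > N$ the contribution of the Laplacian term to the rescaled energy vanishes in the limit. This should follow from the Sobolev/elliptic estimates available on $\rcd{K}{N}$ spaces (the same ones underlying \eqref{e:czint} and \eqref{e:lipharmint}), giving $\fint_{B_R(x)}|\nabla u - \nabla(\text{harmonic replacement})|^2 \le C R^{2 - N/p} \|\Delta u\|_{L^p}^2 \to 0$. So it suffices to treat the harmonic case.

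Next I would suppose for contradiction that $\limsup_{R\to 0}\fint_{B_R(x)}|\nabla u|^2 = a > 0$ along some sequence. I would consider the normalized functions $v_r(y) := (u(y) - u(x)) / \big(r (\fint_{B_r(x)}|\nabla u|^2)^{1/2}\big)$ on the rescaled spaces. Using the Lipschitz estimate \eqref{e:lipharmint} together with the Bochner-based gradient estimates on $\rcd{K}{N}$ spaces, the $v_r$ are uniformly Lipschitz on bounded sets and equi-Hölder, hence (along a further subsequence, using the Rellich-type compactness and stability of harmonic functions under pmGH convergence proved in the Cheeger–Colding / Gigli theory) converge locally uniformly and in $H^{1}_{loc}$ to a nonconstant harmonic function $v_\infty$ on the tangent cone $C(Z)$, with $v_\infty(o) = 0$ and $\fint_{B_1(o)}|\nabla v_\infty|^2 = 1$ (normalization passes to the limit by the energy convergence in the stability theorem). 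Moreover the assumption that $\lim_{R\to 0}\fint_{B_R(x)}|\nabla u|^2$ exists is what I would \emph{not} assume — rather, I'd run the argument at the scale where the limsup is achieved and still get a nonconstant limit.

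The crux is then a Liouville-type / Almgren-frequency statement on the sharp cone: on an $N$-metric measure cone $C(Z)$ with $\diam(Z) < \pi$, there is no nonconstant harmonic function $v_\infty$ which is, in an appropriate averaged sense, "flat at the vertex," i.e. whose frequency (the Almgren ratio $\int_{B_r}|\nabla v_\infty|^2 / \int_{\partial B_r} v_\infty^2$, suitably interpreted on the cone) is bounded. Concretely, harmonic functions on a metric cone separate variables: $v_\infty = \sum_k r^{\alpha_k}\phi_k(z)$ where $\phi_k$ are eigenfunctions of the Laplacian on $Z$ with eigenvalue $\mu_k$ and $\alpha_k(\alpha_k + N - 2) = \mu_k$; since $\diam(Z) < \pi$, the first nonzero eigenvalue satisfies $\mu_1 > N-1$ (a Lichnerowicz-type bound, using that $Z$ is $\rcd{N-2}{N-1}$ and has small diameter), forcing $\alpha_1 > 1$, so $|\nabla v_\infty| = O(r^{\alpha_1 - 1}) \to 0$ at $o$, whence $\fint_{B_\rho(o)}|\nabla v_\infty|^2 \to 0$ as $\rho \to 0$, contradicting the unit normalization once we take $\rho$ small (a second blow-up at $o$ inside the cone produces a constant, contradicting nonconstancy). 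I expect the \textbf{main obstacle} to be making rigorous the separation-of-variables decomposition and the eigenvalue gap $\mu_1 > N - 1$ in the purely synthetic $\rcd{}{}$ setting — in particular controlling the spectrum of the base $Z$ from the cone condition and justifying that no harmonic function on $C(Z)$ grows slower than degree-one except constants; the rescaling/compactness and the $L^p$-reduction steps are comparatively routine given the cited theory.
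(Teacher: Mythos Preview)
Your overall strategy---blow up to a sharp tangent cone, use separation of variables together with the strict Lichnerowicz/Obata eigenvalue gap to force superlinear homogeneity of harmonic functions at the vertex, and note that $p>N$ makes the Laplacian term subcritical under rescaling---matches the paper's proof, including the ingredient you flag as the ``main obstacle'' (the paper handles it via a compactness argument plus Ketterer's Obata rigidity and a known separation-of-variables decomposition on metric cones). The gap is in how you close the contradiction.

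Obtaining a harmonic limit $v_\infty$ on the cone with $\fint_{B_1(o)}|\nabla v_\infty|^2=1$ and then observing that $\fint_{B_\rho(o)}|\nabla v_\infty|^2\to 0$ as $\rho\to 0$ is \emph{not} a contradiction: these two facts are perfectly compatible for any nonconstant harmonic function on a sharp cone. Your ``second blow-up at $o$'' does not help either: it produces a constant only because you are dividing by a normalizing factor tending to zero, and this says nothing about the original $u$. More structurally, a single blow-up along a sequence $r_i$ realizing the limsup transfers the cone decay only to the scales $\rho r_i$; the limsup of $\fint_{B_R(x)}|\nabla u|^2$ could still be achieved along a completely different sequence of radii, so nothing forces it to drop.

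What the paper does instead is to convert the cone decay into an inequality valid at \emph{every} small scale and then iterate. By the same compactness/contradiction mechanism---but negating a two-scale statement rather than a one-scale one---it shows that there exist $\delta_0>0$ and $R_0>0$, depending only on $K,N$ and the point $x$, such that for all $R\le R_0$,
\[
\fint_{B_{R/2}(x)}|\nabla u|^2 \le (1-\delta_0)\fint_{B_R(x)}|\nabla u|^2 + C\, R^2\fint_{B_R(x)}|\Delta u|^2,
\]
and the last term is $O(R^{2(1-N/p)})\to 0$ by H\"older. Iterating this dyadic inequality yields $\fint_{B_R(x)}|\nabla u|^2\to 0$. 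Two points are essential and are missing from your sketch: the decay constant $\delta_0$ must be uniform over \emph{all} tangent cones at $x$ (this requires a separate compactness lemma showing $\sup_Y\diam(Y)<\pi$ over the family of sections $Y$), and the decay must hold at every scale below $R_0$, not just along one subsequence, so that it can be iterated. Your harmonic-replacement reduction and the spectral step are fine; the defect is purely in the logical structure of the endgame.
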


We remark that the assumption on the finiteness of the Bishop-Gromov density is satisfied at all points for non-collapsed $\mathsf{RCD}$ spaces, see \cite{DePGigNonCol}. In particular the above result holds true  for finite-dimensional Alexandrov spaces when considered as metric measure spaces equipped with the Hausdorff measure. Points with sharp tangent cones were called \textit{essentially singular points} in \cite{MitYam}. Let us further remark that once a half-Harnack inequality of the form
\[
|\nabla u|^2(x)\lesssim  \fint_{B_R(x)}|\nabla u|^2+R^{2(1-\frac{N}{p})}\Bigl(\int_{B_R(x)} |\Delta u|^p\Bigr)^{\frac{2}{p}}\qquad p>N,\quad R\ll1 
\]
 is available, it follows from \cref{thm-mwug-vanishes} that $\abs{\nabla u}(x)=0$ at sharp points under the same assumptions. This inequality can be obtained in the context of $\mathsf{RCD}(K,N)$ spaces by the same techniques used in the proof of  \cite[Theorem 3.1]{JiangKoskelaYang}, since we do not need it in the sequel, we do not insist on this point.

Being  easy to build metric  measure spaces with a dense set of sharp points (see for instance Example (2) in \cite{OtsuShioya} for a $2$-dimensional example which can be easily generalized to any dimension),  \cref{thm-mwug-vanishes} implies that for a non constant  harmonic function \(u\), \(|\nabla u|\) cannot be continuous.
Furthermore, it is  easy  to construct a sequence of smooth manifolds with non-negative sectional curvature converging to  an Alexandrov space with a dense set of sharp  points. This implies the  following two corollaries which answer negatively \ref{q:Q1} and \ref{q:Q2}, at least for \(p\) large. Note that  \cref{cor:noharm} actually excludes every possible modulus of continuity also on the norm of the gradient. 

\begin{cor}\label{cor:noharm}
Let \(\omega\) be a modulus of continuity\footnote{Recall that a modulus of continuity is a continuous and bounded function \(\omega: (0,+\infty)\to (0,1]\) such that \(\omega(0+)=0\).} and let \(N\in \mathbb N\) and \(D\ge 2\). Then 
there exist  sequences of \(N\)-dimensional  Riemannian manifolds \((X_k, g_k)\)  with \(\diam (X_k)\le D \), \(\sect(X_k)\ge 0\), points \(p_k\in X_k\)   and harmonic functions  \(f_k:B_1^{X_k}(p_k)\to \mathbb R\)  such that 
\[
\|f_{k}\|_{W^{1,2}(B_1^{X_k}(p_k))}=1
\]
and 
\[
\lim_{k\to +\infty}\sup_{p,q\in B_{1/2}^{X_k}(p_k)}\frac{\bigl| | \nabla f_k|(p)|- |\nabla f_k|(q)|\bigr|}{\omega(d_{g_k}(p,q))}=+\infty.
\]
\end{cor}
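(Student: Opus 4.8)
The plan is to build the required sequence by a diagonal/rescaling argument on top of \cref{thm-mwug-vanishes}. First I would fix, once and for all, a single compact Alexandrov space $(Y,d_Y,\mathcal H^N)$ of nonnegative curvature, dimension $N$ and diameter $\le D$, which contains a point $y_0$ whose tangent cone is sharp (e.g. the spherical-suspension type example of \cite{OtsuShioya}, or a quotient $\mathbb S^N/\Gamma$ producing a genuine conical singularity with cross-section of diameter $<\pi$). By Gromov compactness together with the smoothing results for Alexandrov spaces (Ricci/sectional-lower-bound-preserving approximations), one obtains smooth Riemannian manifolds $(X_k,g_k)$ with $\sect(X_k)\ge 0$ — or $\ge -1/k$, which after a negligible rescaling can be arranged $\ge 0$ up to slightly enlarging $D$; alternatively one argues directly with $\sect\ge 0$ since the examples at hand are limits of such — and $\diam(X_k)\le D$, converging to $Y$ in the (measured) Gromov-Hausdorff sense. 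Pick points $p_k\in X_k$ converging to $y_0$.

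Next I would produce the harmonic functions. On $Y$, solve a Dirichlet problem on $B_2^Y(y_0)$ (say with non-constant boundary data, or take the first Dirichlet eigenfunction) to get a non-constant harmonic $f$ on $B_2^Y(y_0)$, normalized so that $\|f\|_{W^{1,2}(B_1^Y(y_0))}=1$; since $Y$ is $\rcd{0}{N}$ and $y_0$ has sharp tangent cone, \cref{thm-mwug-vanishes} gives $\lim_{R\to0}\fint_{B_R(y_0)}|\nabla f|^2=0$, while non-constancy forces $\sup_{B_{1/2}^Y(y_0)}|\nabla f|^2=:A>0$ (indeed $|\nabla f|$ cannot vanish on a set of positive measure near $y_0$). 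Now transplant: using stability of the Laplacian and of harmonic functions under mGH convergence (the spectral/energy convergence theory for $\rcd$ spaces, which applies since each $(X_k,g_k)$ is $\rcd{0}{N}$), solve the corresponding Dirichlet problem on $B_2^{X_k}(p_k)$ to get harmonic $f_k$ with $\|f_k\|_{W^{1,2}(B_1^{X_k}(p_k))}=1$ and $f_k\to f$ in the appropriate sense, in particular $|\nabla f_k|^2\to|\nabla f|^2$ in $L^2$ locally and (by the gradient estimate \eqref{e:lipharmint}, which holds uniformly in $k$) locally uniformly. Hence for $k$ large $\sup_{B_{1/2}^{X_k}(p_k)}|\nabla f_k|^2\ge A/2$, and also $\fint_{B_{r}^{X_k}(p_k)}|\nabla f_k|^2$ is, for fixed small $r$, close to $\fint_{B_r^Y(y_0)}|\nabla f|^2$, which is as small as we like once $r$ is small.

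The core oscillation estimate then goes as follows. Fix a small radius $r$ with $\fint_{B_r^Y(y_0)}|\nabla f|^2\le\varepsilon$; for $k$ large the same holds on $X_k$ with $2\varepsilon$. By the uniform half-Harnack / mean-value inequality for $\rcd{0}{N}$ spaces one gets a point $q_k\in B_r^{X_k}(p_k)$ with $|\nabla f_k|(q_k)^2\lesssim \fint_{B_r^{X_k}(p_k)}|\nabla f_k|^2\le 2\varepsilon$ (or simply: the average is small, so the essential infimum on $B_r$ is small, and by continuity of $|\nabla f_k|$ on the smooth $X_k$ this infimum is attained near $p_k$). Pair this with a point $p_k'\in B_{1/2}^{X_k}(p_k)$ where $|\nabla f_k|$ is close to its max, so $|\nabla f_k|(p_k')\ge\sqrt{A/2}$. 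Then
\[
\frac{\bigl|\,|\nabla f_k|(p_k')-|\nabla f_k|(q_k)\,\bigr|}{\omega\bigl(d_{g_k}(p_k',q_k)\bigr)}\ \ge\ \frac{\sqrt{A/2}-C\sqrt{\varepsilon}}{\omega(r)}\,,
\]
using $d_{g_k}(p_k',q_k)\le 1$ and monotonicity of $\omega$ only up to replacing $\omega$ by its least monotone majorant (a modulus of continuity may be taken nondecreasing WLOG). Choosing $\varepsilon=\varepsilon_k\to0$ and correspondingly $r=r_k\to0$, and passing to a subsequence in $k$ so that all the approximations above hold with error $\to0$, the right-hand side behaves like $\tfrac12\sqrt{A/2}\,/\,\omega(r_k)\to+\infty$ since $\omega(0+)=0$. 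A diagonal argument over $k$ then yields the claimed sequence. The main obstacle I anticipate is technical rather than conceptual: ensuring that the harmonic functions $f_k$ on $X_k$ genuinely converge to $f$ with control on both the $W^{1,2}$-normalization and local uniform convergence of $|\nabla f_k|$, which requires invoking the mGH-stability of solutions of Dirichlet problems and of the gradient estimate \eqref{e:lipharmint} uniformly along the sequence; one must also be slightly careful that the normalization $\|f_k\|_{W^{1,2}(B_1)}=1$ does not degenerate (it does not, since $\|f\|_{W^{1,2}(B_1^Y(y_0))}=1$ and the measures converge), and that the smoothing $X_k\to Y$ can be taken with $\sect\ge0$ exactly, which is the point where one uses that the target is an explicit nonnegatively curved example rather than a general limit.
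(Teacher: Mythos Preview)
There is a genuine gap in your oscillation estimate. You produce $q_k\in B_{r_k}^{X_k}(p_k)$ with $|\nabla f_k|(q_k)$ small and $p_k'\in B_{1/2}^{X_k}(p_k)$ with $|\nabla f_k|(p_k')\ge\sqrt{A/2}$, and then write
\[
\frac{\bigl|\,|\nabla f_k|(p_k')-|\nabla f_k|(q_k)\,\bigr|}{\omega\bigl(d_{g_k}(p_k',q_k)\bigr)}\ \ge\ \frac{\sqrt{A/2}-C\sqrt{\varepsilon}}{\omega(r)}\,,
\]
justifying this ``using $d_{g_k}(p_k',q_k)\le 1$ and monotonicity of $\omega$''. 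But $d\le 1$ and monotonicity only give $\omega(d)\le\omega(1)$, not $\omega(d)\le\omega(r)$; to get $\omega(r)$ in the denominator you would need $d_{g_k}(p_k',q_k)\le r$, and this is exactly what fails. Combining \cref{thm-mwug-vanishes} with the uniform Lipschitz estimate \eqref{e:lipharmint} one has $\sup_{B_\rho^Y(y_0)}|\nabla f|^2\le C\fint_{B_{2\rho}^Y(y_0)}|\nabla f|^2\to 0$ as $\rho\to 0$, so $|\nabla f|$ is \emph{uniformly} small on small balls around the sharp point. Hence any point where $|\nabla f|$ (and, after passing to the limit, $|\nabla f_k|$) is $\ge\sqrt{A/2}$ must stay at a fixed positive distance $d_0>0$ from $y_0$; consequently $d_{g_k}(p_k',q_k)\to d_0$ and the ratio is bounded by roughly $\sqrt{A/2}/\omega(d_0)<\infty$. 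In fact, for a single sharp cone the decay is polynomial, $|\nabla f|\sim r^{\alpha_1-1}$ with $\alpha_1>1$ as in \cref{pro-charact-harmonic-fns}, so for any $\omega$ decaying slower than this power (e.g.\ $\omega(r)=r^{(\alpha_1-1)/2}$) your construction yields no blow-up at all.

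This is precisely why the paper does not work with a single sharp point but takes a limit space with a \emph{dense} set of sharp points (item \ref{i:badset}) and argues by contradiction: if the $\omega$-estimate held uniformly along the approximating sequence, it would pass to the limit and force $|\nabla f|$ to be continuous on $Y$; by \cref{thm-mwug-vanishes} it vanishes on a dense set, hence everywhere, contradicting non-constancy of $f$. The density is exactly what guarantees that near \emph{any} point of large gradient there is a nearby point where the gradient must vanish, which your single-singularity setup cannot provide. Your framework (smooth approximants, transplanted harmonic functions via Mosco convergence, uniform gradient bounds) is fine; the fix is to replace the one-sharp-point target by the Otsu--Shioya example with dense sharp set and run the contradiction as in the paper.
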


\begin{cor}\label{cor:nocz}
Let  \(N\in \mathbb N\), \(D\ge 2\) and  \(p>N\). Then  there exist  sequences of \(N\)-dimensional  Riemannian manifolds \((X_k, g_k)\)  with \(\diam (X_k)\le D \), \(\sect(X_k)\ge 0\) and of smooth functions \(f\in C^\infty(X_k)\) such that 
\[
\|\Delta f_{k}\|_{L^p}+\|\nabla f_k\|_{L^p}=1
\]
and such that
\[
\lim_{k\to +\infty}\|\nabla^2 f_{k}\|_{L^p}=+\infty.
\]
\end{cor}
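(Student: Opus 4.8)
The plan is to produce the functions $f_k$ directly, as normalized eigenfunctions on a carefully chosen approximating sequence, and to use \cref{thm-mwug-vanishes} to force the blow-up of their Hessians. The geometric input, which is elementary and which the paper treats as routine, is the following: fix an $N$-dimensional Alexandrov space $(X_\infty,d_\infty,\mathcal H^N)$ with $\sect(X_\infty)\ge 0$ and $\diam(X_\infty)\le D$ whose set of sharp points is \emph{dense}, realized as a measured Gromov--Hausdorff (mGH) limit of closed $N$-dimensional Riemannian manifolds $(X_k,g_k)$ with $\sect(X_k)\ge 0$, $\diam(X_k)\le D$ and $\mathrm{vol}(X_k)\to\mathcal H^N(X_\infty)>0$ (no collapse). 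For $N=2$ one smooths the conical tips of the cone surfaces of \cite{OtsuShioya} by spherical caps; for general $N$ one may, for instance, double a convex body of $\mathbb R^N$ whose boundary carries a dense family of conical kinks and round the resulting crease. Since Alexandrov spaces are $\rcd{0}{N}$ spaces (Petrunin, Zhang--Zhu) and finite-dimensional Alexandrov spaces have finite Bishop--Gromov density at every point, \cref{thm-mwug-vanishes} is available on $X_\infty$ at every sharp point.

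Because $X_\infty$ is compact and connected, its first nonzero eigenvalue $\lambda=\lambda_1(X_\infty)$ is positive, with a non-constant eigenfunction $\varphi$, $\Delta\varphi=-\lambda\varphi$, $\norm{\varphi}_{L^2}=1$. By the spectral convergence of mGH-converging $\rcd$ spaces of bounded diameter (Mosco convergence of the Cheeger energies) we may pick eigenfunctions $\varphi_k$ on $X_k$, $\Delta\varphi_k=-\lambda_k\varphi_k$, $\norm{\varphi_k}_{L^2}=1$, with $\lambda_k\to\lambda$ and $\varphi_k\to\varphi$ strongly in $W^{1,2}$ along the convergence. Standard \emph{a priori} estimates for eigenfunctions on manifolds with $\Ric\ge 0$ (the Bochner-type gradient estimate, and heat-kernel bounds together with the lower volume bound) give $\norm{\varphi_k}_{L^\infty}+\norm{\nabla\varphi_k}_{L^\infty}\le C_0$ uniformly in $k$; consequently $\norm{\Delta\varphi_k}_{L^p}+\norm{\nabla\varphi_k}_{L^p}$ stays bounded and, by the $W^{1,2}$-convergence combined with these $L^\infty$ bounds, converges to $\lambda\norm{\varphi}_{L^p}+\norm{\nabla\varphi}_{L^p}>0$ ($\varphi$ being non-constant). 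Put $f_k:=\varphi_k/(\norm{\Delta\varphi_k}_{L^p}+\norm{\nabla\varphi_k}_{L^p})$; then $f_k\in C^\infty(X_k)$ and $\norm{\Delta f_k}_{L^p}+\norm{\nabla f_k}_{L^p}=1$, so the corollary reduces to the claim that $\norm{\nabla^2\varphi_k}_{L^p(X_k)}\to+\infty$ along a subsequence.

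Suppose, to the contrary, that $\sup_k\norm{\nabla^2\varphi_k}_{L^p(X_k)}<\infty$. The manifolds $X_k$, having $\sect\ge 0$, $\diam\le D$ and volume bounded below, are uniformly doubling and satisfy a uniform Poincar\'e inequality, hence a Sobolev--Morrey embedding $\norm{v}_{C^{0,\alpha}(X_k)}\le c\,\norm{v}_{W^{1,p}(X_k)}$, $\alpha=1-N/p>0$, with $c$ independent of $k$. Applying it to $v=\abs{\nabla\varphi_k}$, for which $\abs{\nabla v}\le\abs{\nabla^2\varphi_k}$ by Kato's inequality, and using the uniform bounds on $\norm{\nabla\varphi_k}_{L^p}$ and $\norm{\nabla^2\varphi_k}_{L^p}$, we obtain a uniform $C^{0,\alpha}$ bound on $\abs{\nabla\varphi_k}$. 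Along the mGH convergence these equicontinuous, uniformly bounded functions converge uniformly to some $g\in C^{0,\alpha}(X_\infty)$, and the $W^{1,2}$-convergence of $\varphi_k$ identifies $g$ with $\abs{\nabla\varphi}$ $\mathcal H^N$-a.e.; hence $\abs{\nabla\varphi}$ has a continuous representative. Now $\Delta\varphi=-\lambda\varphi\in L^\infty(X_\infty)\subset L^p(X_\infty)$ with $p>N$, so \cref{thm-mwug-vanishes} gives $\fint_{B_R(x)}\abs{\nabla\varphi}^2\to 0$ as $R\to 0$ at every point whose tangent cones are all sharp, i.e.\ at every sharp point; by continuity of $\abs{\nabla\varphi}=g$ this yields $g(x)=0$ at every sharp point, whence $g\equiv 0$ by density, so $\abs{\nabla\varphi}=0$ a.e.\ and $\varphi$ is constant---contradicting $\lambda>0$. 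Therefore $\norm{\nabla^2\varphi_k}_{L^p(X_k)}\to+\infty$ along a subsequence, and $(X_k,f_k)$ along that subsequence is as claimed.

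The main difficulty is concentrated in the convergence inputs: the spectral convergence of the Laplacians along the mGH-converging family, the uniform Lipschitz and Sobolev--Morrey estimates on the approximating manifolds (with constants depending only on $N$, $D$ and the lower volume bound), and the strong $W^{1,2}$-convergence of the eigenfunctions with the consequent uniform convergence of $\abs{\nabla\varphi_k}$; these are by now standard tools in the $\rcd$ theory, but combining them correctly is the delicate point. Everything else---the soft geometric construction of the approximating sequence, the Sobolev--Morrey embedding on $\rcd$ spaces, and the application of \cref{thm-mwug-vanishes}---is comparatively routine.
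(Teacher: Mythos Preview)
Your argument is correct and follows essentially the same mechanism as the paper: a uniform bound on $\norm{\nabla^2 f_k}_{L^p}$ combined with Kato's inequality and the Sobolev--Morrey embedding forces $|\nabla\varphi|$ to be continuous on the limit space, whereupon \cref{thm-mwug-vanishes} and the density of sharp points give the contradiction. The only cosmetic difference is that you fix eigenfunctions as explicit test functions and contradict boundedness of their Hessians directly, while the paper negates the statement to obtain a uniform Calder\'on--Zygmund constant and then reaches the contradiction via approximating harmonic functions exactly as in the proof of \cref{cor:noharm}.
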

In particular, for \(p>N\),  \eqref{e:czint} cannot be true with a constant which depends only on a lower bound on the sectional curvature and on an upper bound on the volume.

Let us note that the validity of a Calder\'on-Zygmund theory with constants depending only on the geometric data of the manifold has been investigated in \cite{GunPig}, where  positive results were obtained assuming a bound on the whole Ricci tensor and on the injectivity radius of the manifold. Examples of non-compact manifolds where a Calder\'on-Zygmund theory fails can be found in \cite{GunPig} and in \cite{Li}. However, to the best of our knowledge, the results in  \cref{cor:nocz} are the first to show that it is impossible to have a Calder\'on-Zygmund theory with constants depending only on a lower bound on the sectional curvature.

We conclude this introduction by briefly explaining the strategy of the proof of  \cref{thm-mwug-vanishes} which is based on a simple perturbation argument. Indeed, by the explicit form of harmonic functions on a cone \(C=\con_{O}^N(X)\) one easily deduces that  if the cone is singular and \(v\) is harmonic, then 
\[
\fint_{B_{1/2}} |\nabla v|^2\le(1-\delta)  \fint_{B_{1}} |\nabla v|^2,
\]
where \(\delta\) is a positive constant that depends only on the gap \((\pi-\diam X)\). Since the sequence of rescaled spaces \((X,d/r,m/r^N)\) converges to \(C\), a simple compactness argument implies the validity of an estimate of the form
\[
\fint_{B_{\bar {r}/2}} |\nabla u|^2\le(1-\delta)  \fint_{B_{\bar{r}}} |\nabla u|^2 +C  \fint_{B_{\bar{r}}}|\Delta  u|^{2}
\] 
for a certain small and fixed radius \(\bar r\). Iterating, one easily deduces that, if \(\Delta u\in L^p\) for \(p>N\), then  \(\fint_{B_r} |\nabla u|^2\to 0\) as \(r \to 0\).

\begin{ack}
We thank Nicola Gigli for encouraging us to write this note and for several useful discussions. G.D.P. is supported by an INDAM grant ``Geometric Variational Problems''. J.N.-Z. was supported by a MIUR SIR-grant ``Nonsmooth Differential Geometry'' (RBSI147UG4) and a DGAPA-UNAM Postdoctoral Fellowship. J.N.-Z. also acknowledges support from CONACyT project CB2016-283988-F.
\end{ack}


\section{Proof of  \cref{thm-mwug-vanishes}}

We assume the reader to be familiar with the theory of \(\rcd{K}{N}\) spaces; we refer to \cite{GigLecNotes} for the general theory and the main notations. Here we simply recall  that if \((X,d_X, m_X)\) is a metric measure space, then \(\mathrm{Con}_0^N(X):=(C, d_c, m_c)\) is the metric measure space given by the warping product  \(X\times_{r^{N-1}} \mathbb R\), see \cite{Ket}. In particular, the measure \(m_c\) is characterized by the equality
\begin{equation}\label{e:measurewarping}
\int f(r, x)\, \d m_{c}=\int_0^{+\infty} r^{N-1}\int_{X} f(r,x)\, \d m_X(x) \d r\qquad \text{for all \(f\) Borel}.
\end{equation}
Moreover, if \(f\in W^{1,2}( \mathrm{Con}_0^N)\), then
\begin{equation}\label{e:gradientwarping}
|D f|^2_{c}(r,x)=r^{-2}|D f^{(r)}|_X^2(x)+|D f^{(x)}|^2_ {\mathbb R}(r),
\end{equation}
where \(f^{(r)}:=f(r,\cdot): X \to \mathbb R \) and \(f^{(x)}=f(\cdot,x) :\mathbb R\to \mathbb R\). Here, $|Df|_c$, $|Df^{(r)}|_X$ and $|Df^{(x)}|_{\mathbb{R}}$ denote the minimal weak upper gradients of the corresponding functions on $\mathrm{Con}_0^N(X)$, $X$ and $\mathbb{R}$ respectively, see \cite{GigHan}. Given a metric space \((X,d)\) we denote by \(B^X_R(x)\) (or if the space is clear from the context by \(B_R(x)\)) the open ball centered at \(x\).

The following decomposition for harmonic functions defined on a cone easily follows by separation of variables and spectral theory, see  \cite[Theorem 3.1]{Huang}. 

\begin{pro}
\label{pro-charact-harmonic-fns}
Let $N\geq 2$, $(X,d,m)$ an $\rcd{N-2}{N-1}$ space and $C:=\mathrm{Con}_0^N(X)$ be the metric measure cone constructed over \(X\). If $u:B_{1}(O) \subset C\to\mathbb{R}$ is  a \(W^{1,2}\) harmonic function, then there exist $\{a_i\}_{i}\in \mathbb{R}$ such that  
\begin{equation}\label{e:dec}
u(r,x)=\sum_{i=0}^{\infty} a_i r^{\alpha_i} \varphi_i(x),
\end{equation}
where $O\in C$ is the vertex of $C$, \(\varphi_i: X \to \mathbb R\) is an eigenfunction of \(-\Delta_X\) (the Laplacian on \(X\)) with eigenvalue \(\lambda_i\)  and 
\begin{equation}\label{e:alpha}
\alpha_i=\frac{-(N-2)+\sqrt{(N-2)^2+4\lambda_i}}{2}.
\end{equation}
Here the convergence in \eqref{e:dec} is intended in \(W^{1,2}(B_1(O))\) and we are ordering the eigenvalues as:
\[
0=\lambda_0\le\lambda_1\le\dots\le \lambda_i\le \dots
\]
\end{pro}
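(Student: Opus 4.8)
The plan is to prove \cref{pro-charact-harmonic-fns} by separation of variables, reducing the PDE on the cone to a coupled family of ODEs indexed by the eigenfunctions of the cross-section $X$, and then identifying which solutions are admissible in $W^{1,2}$.

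\textbf{Setup and spectral decomposition on the cross-section.} Since $(X,d,m)$ is $\rcd{N-2}{N-1}$ and, as a cross-section of a singular cone, is compact with finite measure, the operator $-\Delta_X$ has discrete spectrum $0 = \lambda_0 \le \lambda_1 \le \cdots \to \infty$ with an orthonormal basis $\{\varphi_i\}_i$ of $L^2(X,m)$ consisting of eigenfunctions (we may normalize $\varphi_0 \equiv m(X)^{-1/2}$). By \eqref{e:measurewarping}, for a.e.\ $r \in (0,1)$ the slice $u^{(r)} = u(r,\cdot)$ lies in $W^{1,2}(X)$, so we may write
\[
u(r,x) = \sum_{i=0}^\infty a_i(r)\, \varphi_i(x), \qquad a_i(r) := \int_X u(r,x)\, \varphi_i(x)\, \d m_X(x).
\]
Using \eqref{e:gradientwarping}, the Dirichlet energy of $u$ on $B_1(O)$ expands as
\[
\int_{B_1(O)} |Du|_c^2 \,\d m_c = \int_0^1 r^{N-1} \Bigl( r^{-2} \sum_i \lambda_i\, a_i(r)^2 + \sum_i a_i'(r)^2 \Bigr)\,\d r,
\]
so each $a_i$ is in a weighted Sobolev space on $(0,1)$.

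\textbf{The radial ODE and its admissible solutions.} Testing the harmonicity $\Delta_C u = 0$ against $\psi(r)\varphi_i(x)$ and integrating by parts in $r$ (with the weight $r^{N-1}$) and in $x$ (producing the factor $\lambda_i$), one finds that $a_i$ is a weak solution of the Euler-type ODE
\[
(r^{N-1} a_i')' - \lambda_i\, r^{N-3} a_i = 0 \quad \text{on } (0,1).
\]
The indicial equation $\alpha(\alpha-1) + (N-1)\alpha - \lambda_i = 0$ has roots $\alpha_i$ as in \eqref{e:alpha} and $\alpha_i^- = \tfrac{-(N-2)-\sqrt{(N-2)^2+4\lambda_i}}{2}$, so $a_i(r) = a_i\, r^{\alpha_i} + b_i\, r^{\alpha_i^-}$ (with the obvious logarithmic modification if $N=2$ and $\lambda_i = 0$, where one checks $b_0 = 0$ directly from energy-finiteness, and in fact $\alpha_0 = 0$, $a_0(r) \equiv a_0$). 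The key point is that the weighted finite-energy condition $\int_0^1 (r^{N-1} a_i'^2 + r^{N-3}\lambda_i a_i^2)\,\d r < \infty$ forces $b_i = 0$: indeed $r^{\alpha_i^-}$ contributes $\int_0^1 r^{N-3+2\alpha_i^-}\,\d r$, and since $N-2+2\alpha_i^- = -\sqrt{(N-2)^2+4\lambda_i} \le -(N-2) \le 0$ one has $N-3+2\alpha_i^- \le -1$, so the integral diverges unless $b_i = 0$ (when $\lambda_i = 0$ one argues separately). This yields \eqref{e:dec} pointwise in the $L^2$-in-$x$ sense.

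\textbf{Upgrading convergence to $W^{1,2}(B_1(O))$.} It remains to check that the partial sums converge in $W^{1,2}$, not merely that the coefficients have the stated form. Here I would use the energy identity above: the tail $\sum_{i \ge k} a_i^2 \int_0^1 (r^{N-1}\alpha_i^2 r^{2\alpha_i-2} + r^{N-3}\lambda_i r^{2\alpha_i})\,\d r$ is a tail of the (finite) total Dirichlet energy of $u$, hence tends to $0$; together with the analogous $L^2$ statement (using $\int_0^1 r^{N-1+2\alpha_i}\,\d r < \infty$, valid since $\alpha_i \ge 0$) this gives convergence in $W^{1,2}(B_1(O))$.

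\textbf{Main obstacle.} The routine part is the indicial analysis; the real care is needed in (i) justifying the termwise integration by parts that produces the ODE in the weak sense on a possibly singular space $C$ — this requires knowing that the Laplacian on the warped product acts on separated functions by the formula $\Delta_C(\psi(r)\varphi(x)) = (\psi'' + \tfrac{N-1}{r}\psi' )\varphi + r^{-2}\psi\, \Delta_X\varphi$, which is exactly the content of the cone calculus in \cite{Ket, GigHan} — and (ii) the removal of the singular branch $r^{\alpha_i^-}$, which must be done purely through the finite-energy condition since no boundary regularity at $O$ is available a priori. Once these two points are in place, the decomposition and its mode of convergence follow as above.
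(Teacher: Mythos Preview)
Your argument is correct and is precisely the separation-of-variables and spectral-theory approach that the paper invokes; in fact the paper does not give a proof but simply cites \cite[Theorem 3.1]{Huang}, so your write-up actually supplies the details behind that citation. The key steps you identify --- the warped-product Laplacian formula from \cite{Ket,GigHan}, the indicial analysis, and the exclusion of the $r^{\alpha_i^-}$ branch via the finite-energy condition --- are exactly the ingredients needed.
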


Recall that from the non smooth version of the Lichernowitz-Obata Theorem, \cite{KetObata}, we get that  if \(X\) is $\rcd{N-2}{N-1}$, then \(\lambda_1\ge (N-1)\), which in turn implies that \(\alpha_i\ge \alpha_1\ge 1\). A simple compactness argument shows that this bound can be improved if we assume that \(\diam(X)<\pi\).

\begin{lem} 
\label{lem-quant-eigenvalue-bound}
Let $(X,d,m)$ be an $\rcd{N-2}{N-1}$ space. For every $0<\varepsilon<\pi$ there exists $\delta_1=\delta_1(N,\varepsilon)>0$ such that if $\mathrm{diam}(X)\leq \pi -\varepsilon$ then $\lambda_1\geq N-1 +\delta_1$. 
\end{lem}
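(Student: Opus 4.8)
The plan is to argue by contradiction using the stability of the $\rcd{N-2}{N-1}$ condition, the continuity of the first eigenvalue under measured Gromov--Hausdorff convergence, and the rigidity case of the Lichnerowicz--Obata theorem. Suppose the statement fails for some $0<\varepsilon<\pi$. Then there is a sequence of $\rcd{N-2}{N-1}$ spaces $(X_k,d_k,m_k)$ with $\diam(X_k)\le \pi-\varepsilon$ and $\lambda_1(X_k)\to (N-1)$ (using that $\lambda_1(X_k)\ge N-1$ always holds by \cite{KetObata}, so any sequence violating the conclusion must have $\lambda_1(X_k)\to N-1$, or at least $\liminf \lambda_1(X_k)=N-1$; pass to a subsequence). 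After normalizing the measures to be probability measures and invoking the compactness theorem for $\rcd{K}{N}$ spaces with uniformly bounded diameter, a subsequence converges in the measured Gromov--Hausdorff sense to a limit space $(X_\infty,d_\infty,m_\infty)$ which is again $\rcd{N-2}{N-1}$ and satisfies $\diam(X_\infty)\le \pi-\varepsilon$.

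Next I would invoke the spectral stability of the Laplacian under mGH convergence of $\rcd{K}{N}$ spaces (for instance as in \cite{GigMondSav} or \cite{AmbrosioHonda}): the spectrum converges, so $\lambda_1(X_\infty)=\lim_k \lambda_1(X_k)=N-1$. But the rigidity statement accompanying the non-smooth Lichnerowicz--Obata theorem \cite{KetObata} asserts that equality $\lambda_1=N-1$ in an $\rcd{N-2}{N-1}$ space forces $X_\infty$ to be a spherical suspension, $X_\infty=\susp(Y)$ for some $\rcd{N-3}{N-2}$ space $Y$, and in particular $\diam(X_\infty)=\pi$. This contradicts $\diam(X_\infty)\le \pi-\varepsilon<\pi$, and the contradiction proves the lemma. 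One must be slightly careful that the $\liminf$ of $\lambda_1(X_k)$ is what passes to the limit: since $\lambda_1\ge N-1$ on every such space and spectral convergence gives $\lambda_1(X_\infty)=\lim_k\lambda_1(X_k)$ along the chosen subsequence, choosing the subsequence so that $\lambda_1(X_k)\to\liminf_j\lambda_1(X_j)$ suffices.

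The main obstacle, and the point requiring the most care, is ensuring that the two inputs are available in exactly the generality needed: (i) mGH-continuity of $\lambda_1$ for a sequence of $\rcd{N-2}{N-1}$ spaces with uniformly bounded (hence, by Bishop--Gromov, uniformly finite) measure and bounded diameter — this is standard but relies on uniform Poincar\'e and doubling estimates to get compactness of the embedding $W^{1,2}\hookrightarrow L^2$ along the sequence, and on Mosco-type convergence of the Cheeger energies; and (ii) the rigidity (equality) case of Obata's theorem in the $\rcd{}{}$ setting, together with the fact that a spherical suspension over a nonempty space has diameter exactly $\pi$. Granting these, the compactness-and-rigidity scheme is routine. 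A minor point is the normalization of measures: rescaling $m_k$ by a constant does not change $\lambda_1$ or $d_k$, so one may freely assume $m_k(X_k)=1$ before applying compactness.
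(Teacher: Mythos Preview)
Your proposal is correct and follows essentially the same compactness-and-rigidity argument as the paper: contradiction, normalization of measures, mGH-compactness of $\rcd{N-2}{N-1}$ spaces with bounded diameter, spectral continuity (\cite{GigMonSav13}) to get $\lambda_1(X_\infty)=N-1$, and then the rigidity case of the non-smooth Obata theorem (\cite{KetObata}) to force $\diam(X_\infty)=\pi$. The paper's proof is slightly terser but the structure and the cited inputs are the same.
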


\begin{proof}
We prove this result by a contradiction argument. If the result does not hold, then for a fixed $\varepsilon$ there exists a sequence $\{(X_i,d_i,m_i)\}_{i=1}^{\infty}$ of $\rcd{N-2}{N-1}$ spaces (which we can assume to satisfy $m_i(X_i)=1$ without loss of generality) with uniformly bounded diameter by $\pi-\varepsilon$, such that $N-1\leq \lambda_1(X_i) \leq N-1 + 1/i$. 

By compactness, there exists an $\rcd{N-2}{N-1}$ space $(X_{\infty},d_{\infty},m_{\infty})$ such that, up to passing to a subsequence, $X_i$ converges to $X_{\infty}$ in the measured Gromov-Hausdorff sense. Therefore by the continuity of eigenvalues under measure Gromov-Hausdorff convergence in \cite{GigMonSav13}, it follows that  $\lambda_1(X_{\infty})=N-1$. However, $\mathrm{diam}(X)\leq \pi - \varepsilon$ which is a contradiction to the Obata Theorem in \cite{KetObata}. 
\end{proof} 

This result can be combined with Proposition \ref{pro-charact-harmonic-fns} to obtain the following decay property for harmonic functions on singular cones.

\begin{lem}
\label{lem-quant-average-energy-estimate}
Let $(X,d,m)$ be an $\rcd{N-2}{N-1}$ space such that $\mathrm{diam}(X)\leq \pi-\varepsilon$ for some $0<\varepsilon<\pi$. Let $(C,d_c,m_c):=\mathrm{Con}_0^N(X)$. Then,  there  exists $0<\delta_2=\delta_2(N,\varepsilon)<1$ such that if  $u:B_1(O)\subset C\to\mathbb{R}$ is a \(W^{1,2}\) harmonic function,  then for \(R\in (0,1]\),
\[
\fint_{B_{R}(O)}|\nabla u|^2\, \mathrm{d}m_c \leq R^{\delta_2} \fint_{B_1(O)}|\nabla u|^2\, \mathrm{d}m_c.
\]
\end{lem}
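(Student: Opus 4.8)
The plan is to use the explicit decomposition \eqref{e:dec} together with the quantitative eigenvalue gap from \cref{lem-quant-eigenvalue-bound}. First I would compute $\fint_{B_R(O)}|\nabla u|^2\,\d m_c$ explicitly for a single ``spherical harmonic'' mode $u(r,x)=r^{\alpha_i}\varphi_i(x)$. Using \eqref{e:gradientwarping} one has $|\nabla u|^2 = \alpha_i^2 r^{2\alpha_i-2}\varphi_i^2 + r^{2\alpha_i-2}|\nabla^X\varphi_i|^2$, and integrating over $B_R(O)$ via the warping-product formula \eqref{e:measurewarping} (note $B_R(O)$ is a metric ball in the cone, which for cones over spaces of diameter $\le\pi$ is exactly the set $\{r<R\}$) one gets, using $\int_X|\nabla^X\varphi_i|^2\,\d m_X = \lambda_i\int_X\varphi_i^2\,\d m_X$, an integral $\int_0^R r^{N-1}r^{2\alpha_i-2}(\alpha_i^2+\lambda_i)\,\d r \cdot \int_X\varphi_i^2$. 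This scales as $R^{N+2\alpha_i-2}$, while $m_c(B_R(O))$ scales as $R^N$; hence for a single mode
\[
\fint_{B_R(O)}|\nabla u|^2\,\d m_c = R^{2\alpha_i-2}\,\fint_{B_1(O)}|\nabla u|^2\,\d m_c ,
\]
the constant depending only on $N$ through the ratio of the $r$-integrals.

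Next I would handle the general case by $L^2$-orthogonality. Since the $\varphi_i$ are orthogonal eigenfunctions and $r^{\alpha_i}$ are the radial profiles, the cross terms in $|\nabla u|^2$ integrate to zero over each sphere $\{r=\text{const}\}$ — one checks this separately for the radial part ($\alpha_i\alpha_j r^{\alpha_i+\alpha_j-2}\varphi_i\varphi_j$, killed by $\int_X\varphi_i\varphi_j=0$) and the horizontal part ($r^{\alpha_i+\alpha_j-2}\langle\nabla^X\varphi_i,\nabla^X\varphi_j\rangle$, killed after integration by parts using $\Delta_X\varphi_j=-\lambda_j\varphi_j$ and $\int_X\varphi_i\varphi_j=0$). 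Therefore $\fint_{B_R(O)}|\nabla u|^2 = \sum_i c_i(N) a_i^2 R^{2\alpha_i-2}$ with $c_i(N)\ge 0$, and in particular each term decays at least as fast as $R^{2\alpha_1-2}$ since $\alpha_i\ge\alpha_1$. Now \cref{lem-quant-eigenvalue-bound} with $\varepsilon$ gives $\lambda_1\ge N-1+\delta_1$, which plugged into \eqref{e:alpha} yields $\alpha_1 = \tfrac{-(N-2)+\sqrt{(N-2)^2+4\lambda_1}}{2} \ge 1 + \delta_2/2$ for some $\delta_2=\delta_2(N,\varepsilon)>0$ (one has $\alpha_1=1$ exactly when $\lambda_1=N-1$, and $\alpha_1$ is strictly increasing in $\lambda_1$, so a gap in $\lambda_1$ produces a gap in $\alpha_1$; the constant is explicit). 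Hence $2\alpha_1-2\ge\delta_2$, so term by term $c_i a_i^2 R^{2\alpha_i-2}\le R^{\delta_2}\,c_i a_i^2$ for $R\in(0,1]$, and summing gives $\fint_{B_R(O)}|\nabla u|^2\le R^{\delta_2}\fint_{B_1(O)}|\nabla u|^2$, as claimed.

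\textbf{Main obstacle.} The computations above are essentially routine given \eqref{e:dec}; the one point requiring care is the passage from the $W^{1,2}$-convergence of the series \eqref{e:dec} to the termwise identity for $\fint_{B_R(O)}|\nabla u|^2$ — i.e., justifying that the mixed terms genuinely vanish and that one may sum the squared norms. This is where the orthogonality must be invoked cleanly: one should argue that the partial sums converge in $W^{1,2}(B_1(O))$, that the truncations $u_k := \sum_{i\le k}a_i r^{\alpha_i}\varphi_i$ satisfy the desired inequality by the finite computation, and that both sides are continuous under $W^{1,2}(B_1(O))$-limits (the right-hand side trivially, the left-hand side because $\|\nabla u_k\|_{L^2(B_R(O))}\to\|\nabla u\|_{L^2(B_R(O))}$, $B_R(O)\subset B_1(O)$). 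A secondary technical point is identifying the metric ball $B_R(O)$ in the cone with $\{r<R\}$; this holds because $\diam(X)\le\pi-\varepsilon<\pi$ implies every cone geodesic from $O$ is radial, so distance from the vertex equals the $r$-coordinate. Neither point is deep, but both should be stated rather than left implicit.
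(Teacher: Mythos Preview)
Your proposal is correct and follows essentially the same route as the paper: expand $u$ via \eqref{e:dec}, use the $L^2$- and $W^{1,2}$-orthogonality of the $\varphi_i$ together with \eqref{e:measurewarping}--\eqref{e:gradientwarping} to reduce $\fint_{B_R(O)}|\nabla u|^2$ to a sum of terms scaling like $R^{2\alpha_i-2}$, and then invoke \cref{lem-quant-eigenvalue-bound} to get $\alpha_1\ge 1+\delta_2/2$. One small remark: the identification $B_R(O)=\{r<R\}$ does not actually require the diameter bound, since in any metric cone $d_c(O,(r,x))=r$ by definition.
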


\begin{proof}
By \eqref{e:dec} we can write 
\[
u(r,x)=\sum_{i=0}^{\infty} a_i r^{\alpha_i} \varphi_i(x).
\]
Since the series is convergent in \(W^{1,2}\) and \(\{\varphi_i\}_i\) is an orthogonal system in \(L^2(X)\) and in  \(W^{1,2}(X)\), one gets  for \(R\in [0,1]\), 
\begin{equation}\label{e:francia}
\begin{split}
\int_{B_R(O)} |\nabla u|_c^2\, \mathrm{d}m_c &= \sum_{i=1}^{\infty}\sum_{j=1}^{\infty} a_ia_j\left( \int_{B_R(O)}r^{\alpha_i+\alpha_j}\inner{\varphi_i}{\varphi_j}_c + \varphi_i\varphi_j\inner{r^{\alpha_i}}{r^{\alpha_j}}_c	 \,\mathrm{d}m_c\right)
\\
&=\sum_{i=1}^{\infty} a_i^2\left( \int_{B_R(O)} r^{2\alpha_i}\abs{\nabla\varphi_i}^2_c + \varphi_i^2\abs{\nabla r^{\alpha_i}}^2_c \,\mathrm{d}m_c\right)
\\
&=\sum_{i=1}^{\infty} a_i^2\left(\int_0^R r^{2\alpha_i+N-3} \, \mathrm{d}r\int_X \abs{\nabla \varphi_i}_X^2\,\mathrm{d}m_X + \int_0^R\alpha_i^2 r^{2\alpha_i+N-3}\mathrm{d}r \int_X \varphi_i^2\,\mathrm{d}m \right)
\\ 
&= \sum_{i=1}^{\infty}\frac{a_i^2(\lambda_i + \alpha_i^2)R^{2\alpha_i+N-2}}{2\alpha_i + N -2} 
\end{split}
\end{equation}
where we have assumed without loss of generality that  \(\varphi_i\) are \(L^2\)-normalized, we have used  \eqref{e:measurewarping} and \eqref{e:gradientwarping} and the equality
\[
\lambda_i=\lambda_i\int_X \varphi^2_i  =-\int_X \varphi_i\Delta_X \varphi \mathrm{d}m_X=\int_X \abs{\nabla\varphi_i}^2_X\mathrm{d}m_X.
\]
Since  \(m_c(B_R(O))= \frac{R^{N}m(X)}{N}\), and the \(\alpha_i\) are increasing,  \eqref{e:francia} implies that 
\[
\fint_{B_R(O)} |\nabla u|_c^2\le R^{2\alpha_1-2} \fint_{B_1(O)} |\nabla u|_c^2.
\]
By  \cref{lem-quant-eigenvalue-bound}, \(\lambda_1\ge N-1+\delta_1\) and thus \(\alpha_1\ge 1+\delta_2\) for a suitable \(\delta_2>0\). This  concludes the proof.
\end{proof}
 
The next result shows that for tangent cones at a sharp point the assumptions of  \cref{lem-quant-eigenvalue-bound} are always satisfied.

\begin{lem}\label{lm:conebound} Let $(X,d,m)$ be an $\mathsf{RCD}(K,N)$ space and $x\in X$ with $\vartheta(x)<\infty$. Then the family of tangent cones $\mathrm{Con}_0^N(Y)$  is compact with respect to the $\mathrm{pmGH}$ topology if the vertex is chosen as the base point. In particular if  \(x\) is a sharp point, i.e. if every tangent cone has section \(Y\) with \(\diam (Y)<\pi\), then there exists \(\varepsilon=\varepsilon(N,K,x)\) such that  $\diam(Y)<\pi-\varepsilon$ for every tangent cone $\mathrm{Con}_0^N(Y)$ at $x$.
\end{lem}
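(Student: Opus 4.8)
The plan is to reduce both claims to Gromov's precompactness theorem combined with the cone structure of tangent cones at points of finite density established in \cite{DePGigNonCol}. Fix $x\in X$ with $\vartheta(x)<\infty$ and consider, for $r\in(0,1]$, the rescaled pointed spaces $(X_r,d_r,m_r,x):=(X,r^{-1}d,r^{-N}m,x)$; by definition the tangent cones at $x$ are the $\mathrm{pmGH}$-subsequential limits of this family as $r\to 0$.

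For the first assertion I would argue as follows. Since the curvature-dimension condition scales to $\mathsf{RCD}(r^2K,N)$ under this rescaling and $r\le 1$, each $(X_r,d_r,m_r)$ is $\mathsf{RCD}(K\wedge 0,N)$, and since $\vartheta(x)<\infty$ the Bishop-Gromov inequality yields uniform two-sided bounds on $m_r(B_1(x))$ for $r\in(0,1]$. Hence the family $\{(X_r,d_r,m_r,x):r\in(0,1]\}$ is precompact in the $\mathrm{pmGH}$ topology, and by \cite{DePGigNonCol} all of its subsequential limits as $r\to 0$ are $N$-metric measure cones pointed at their vertices. A standard diagonal argument then shows that the set of these limits --- the family of tangent cones at $x$ --- is closed in the $\mathrm{pmGH}$ topology; being contained in a precompact set, it is compact.

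For the second assertion I would argue by contradiction. If no such $\varepsilon$ exists, there is a sequence of tangent cones $\mathrm{Con}_0^N(Y_j)$ at $x$ with $\diam(Y_j)\to\pi$. Each $Y_j$ is an $\mathsf{RCD}(N-2,N-1)$ space, hence $\diam(Y_j)\le\pi$, and since the density of $\mathrm{Con}_0^N(Y_j)$ at its vertex equals $\vartheta(x)$ while by \eqref{e:measurewarping} it also equals $m_{Y_j}(Y_j)/N$, one has $m_{Y_j}(Y_j)=N\vartheta(x)$ for every $j$. Therefore $\{Y_j\}$ is precompact in the $\mathrm{mGH}$ topology and, along a subsequence, $Y_j\to Y_\infty$ for some $\mathsf{RCD}(N-2,N-1)$ space $Y_\infty$. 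The elementary implication that convergence of the fibers forces convergence of the warped products gives $\mathrm{Con}_0^N(Y_j)\to\mathrm{Con}_0^N(Y_\infty)$ in the $\mathrm{pmGH}$ topology, so by the closedness proved above $\mathrm{Con}_0^N(Y_\infty)$ is again a tangent cone at $x$; since $x$ is sharp, this forces $\diam(Y_\infty)<\pi$. On the other hand the diameter is $2$-Lipschitz with respect to the Gromov-Hausdorff distance on compact metric spaces, so $\diam(Y_\infty)=\lim_j\diam(Y_j)=\pi$, a contradiction.

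The hard part will be the correspondence between a cone and its section: one must use both that the rescalings actually converge to genuine metric cones based at a vertex --- this is precisely \cite{DePGigNonCol} --- and that a $\mathrm{pmGH}$ limit of tangent cones, written as a cone, has as its section the $\mathrm{mGH}$ limit of the sections. To handle the latter cleanly the argument is phrased in terms of the sections $Y_j$ and uses only the easy direction ``convergence of fibers $\Rightarrow$ convergence of warped products'' together with the closedness of the tangent cone family from the first part; the remaining ingredients --- the scaling behaviour of $\mathsf{RCD}(K,N)$, Bishop-Gromov volume bounds, Gromov precompactness, and the Lipschitz dependence of the diameter on the GH distance --- are all standard.
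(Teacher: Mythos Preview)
Your argument is correct and matches the paper's: both establish closedness of the tangent-cone family via a diagonal argument on the rescalings, and then obtain the uniform diameter bound by contradiction and compactness. The only difference is tactical---in the second step the paper extracts a $\mathrm{pmGH}$-convergent subsequence of the cones and reads off the limiting section, whereas you extract an $\mathrm{mGH}$-convergent subsequence of the sections $Y_j$ and then build the limiting cone; your phrasing has the mild advantage of invoking only the easy direction of the cone/section correspondence, which you rightly identified as the delicate point.
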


\begin{proof}

Let $(C_i, d_i, m_i,O)$ be a sequence of tangent cones at $x$ converging in the pmGH-topology to a pointed space $(C,d,m,O)$. Consider for each $i$, a sequence of blowups 
\[
(X,d/r_{i,j},m/r_{i,j}^N,x)\quad\overset{\mathrm{pmGH}}{\longrightarrow}\quad (C_i,d_i,m_i, O).
\] 
 By a diagonalization argument there exists a sequence $r_{i_k,j_k}\to 0$ such that the corresponding sequence $(X,d/r_{i_k,j_k},m/r_{i_k,j_k}^N,x)$ converges to $C$ as $k\to \infty$. Whence, $C$ is a tangent space at $x$. 

To prove the second part of the statement, let us assume by contradiction that for a sequence $\varepsilon_i\to 0$, there exist tangent cones $\mathrm{Con}_0^N(Z_i)$ at $x$ such that $\mathrm{diam}(Z_i)\geq\pi-\varepsilon_i$. Therefore, by the compactness proved in the previous paragraph, there exists a subsequence $Z_{i_k}$ converging in the pmGH-topology to a tangent cone $\mathrm{Con}_0^N(Z)$ at $x$ for which  $\mathrm{diam}(Z)=\pi$, a contradiction. 
\end{proof}

The following is the key step in the proof  of  \cref{thm-mwug-vanishes}. In the following, given $x\in X$ and $R>0$, we use the notation $D(\Delta, B_R(x))$ for the family of $W^{1,2}(B_R(x))$-functions admitting a Laplacian on $B_R(x)$ in the sense of \cite[Definition 2.16]{AmbHon}.

\begin{pro}
\label{pro-energy-estimate-rcdkn}
Let $(X,d,m)$ be an $\rcd{K}{N}$ space and $x_0\in X$ with $\vartheta(x_0)<\infty$ be a sharp point.  
Then, there exists $\delta_0=\delta(K,N,x_0)>0$ and $R_0(x_0)>0$ such that if  $R\leq R_0$ and  $u\in D(\Delta, B_R(x_0))$  is such that 
\begin{equation}\label{e:1}
R^2\fint_{B_R(x_0)}\abs{\Delta u}^2 \, \mathrm{d}m \leq \delta_0 \fint_{B_R(x_0)} \abs{\nabla u}^2\, \mathrm{d}m,
\end{equation}
 then
\begin{equation}\label{e:2}
\fint_{B_{R/2}(x_0)} \abs{\nabla u}^2 \,\mathrm{d}m \leq (1-\delta_0)\fint_{B_R(x_0)}\abs{\nabla u}^2\,\mathrm{d}m.
\end{equation}
In particular for \(R\le R_0\) we have  
\begin{equation}\label{e:3}
\fint_{B_{R/2}(x_0)} \abs{\nabla u}^2 \,\mathrm{d}m \leq (1-\delta_0)\fint_{B_R(x_0)}\abs{\nabla u}^2\,\mathrm{d}m+C(N,K,R_0, \delta_0)  R^2\fint_{B_R(x_0)}\abs{\Delta u}^2 \, \mathrm{d}m
\end{equation}
\end{pro}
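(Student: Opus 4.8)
The plan is to argue by contradiction and compactness, using \cref{lem-quant-average-energy-estimate} as the model estimate on the limit cone. Specifically, suppose \eqref{e:2} fails for some fixed choice of $\delta_0$ that we will adjust later. Then for every $\delta_0 = 1/k$ (and correspondingly for a sequence of radii $R_k \to 0$ that we may take since no $R_0$ works) there exist functions $u_k \in D(\Delta, B_{R_k}(x_0))$ satisfying the smallness condition $R_k^2 \fint_{B_{R_k}}|\Delta u_k|^2 \le \tfrac1k \fint_{B_{R_k}}|\nabla u_k|^2$ but with $\fint_{B_{R_k/2}}|\nabla u_k|^2 > (1-\tfrac1k)\fint_{B_{R_k}}|\nabla u_k|^2$. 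Rescale the space: set $X_k := (X, d/R_k, m/R_k^N, x_0)$, which by \cref{lm:conebound} (together with $\vartheta(x_0)<\infty$ and \cite{DePGigNonCol}) converges, up to subsequence, in the pmGH sense to a tangent cone $C = \mathrm{Con}_0^N(Y)$ with $\diam(Y) < \pi - \varepsilon$ for the uniform $\varepsilon = \varepsilon(N,K,x_0)$ from that lemma; moreover $C$ is an $\rcd{0}{N}$ cone so $Y$ is $\rcd{N-2}{N-1}$. Normalize the $u_k$ by an additive constant and a multiplicative constant so that $\fint_{B_1^{X_k}(x_0)}|\nabla u_k|^2 = 1$ and, say, $\fint_{B_1} u_k = 0$; in the rescaled metric the hypotheses read $\fint_{B_1^{X_k}}|\Delta u_k|^2 \to 0$ and $\fint_{B_{1/2}^{X_k}}|\nabla u_k|^2 \ge 1 - 1/k$.

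Next I would invoke the compactness/stability theory for Sobolev functions and the Laplacian under pmGH convergence (Gigli--Mondino--Savaré type results, also used in \cite{AmbHon}): the uniform $W^{1,2}$ bound on $B_1$ — which follows from the normalization together with a Poincaré inequality valid uniformly on $\rcd{K'}{N}$ balls — gives, up to subsequence, an $L^2$- and $W^{1,2}$-weak limit $u_\infty \in W^{1,2}(B_1^C(O))$, with $|\nabla u_k|^2 \to |\nabla u_\infty|^2$ in the appropriate weak sense and lower semicontinuity of the Cheeger energy. Since $\fint_{B_1^{X_k}}|\Delta u_k|^2 \to 0$, the limit satisfies $\Delta u_\infty = 0$ on $B_1^C(O)$, i.e. $u_\infty$ is a $W^{1,2}$ harmonic function on the cone ball. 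Now apply \cref{lem-quant-average-energy-estimate} with this cone $Y$ (note $\diam(Y) \le \pi - \varepsilon$): with $R = 1/2$ we get $\fint_{B_{1/2}^C(O)}|\nabla u_\infty|^2 \le 2^{-\delta_2}\fint_{B_1^C(O)}|\nabla u_\infty|^2$, where $\delta_2 = \delta_2(N,\varepsilon) > 0$ is uniform. On the other hand, passing to the limit in the normalization and in the lower bound, strong $L^2$ convergence of the gradients on compact subsets — which is the delicate point, see below — yields $\fint_{B_1^C(O)}|\nabla u_\infty|^2 \le 1$ and $\fint_{B_{1/2}^C(O)}|\nabla u_\infty|^2 \ge 1$, contradicting the strict decay from \cref{lem-quant-average-energy-estimate} once we note $2^{-\delta_2} < 1$. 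One then fixes $\delta_0$ small enough (smaller than $1 - 2^{-\delta_2}$ and than whatever threshold the contradiction requires) and reads off $R_0$ from the failure of the contradiction hypothesis; finally \eqref{e:3} follows from \eqref{e:2} by the routine trichotomy argument: if \eqref{e:1} holds we use \eqref{e:2}, and if it fails then $\fint_{B_{R/2}}|\nabla u|^2 \le \frac{m(B_R)}{m(B_{R/2})}\fint_{B_R}|\nabla u|^2 \le C(N,K,R_0)\,\delta_0^{-1} R^2 \fint_{B_R}|\Delta u|^2$, using Bishop--Gromov to bound the volume ratio; combining the two cases gives \eqref{e:3} with $C = C(N,K,R_0,\delta_0)$.

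The main obstacle is the convergence step: to reach a contradiction I need not merely weak $W^{1,2}$ convergence but enough compactness to guarantee that the energy on $B_{1/2}$ does not drop in the limit, i.e. that $\liminf_k \fint_{B_{1/2}^{X_k}}|\nabla u_k|^2 \le \fint_{B_{1/2}^C(O)}|\nabla u_\infty|^2$ (or an equivalent statement allowing the lower bound to pass to the limit). The natural way to secure this is via the $H^{1,2}$-strong convergence of $u_k$ on the smaller ball: since $\Delta u_k \to 0$ in $L^2(B_1)$ and $u_k$ is "almost harmonic", interior Caccioppoli / Bochner estimates on $\rcd{K'}{N}$ spaces (uniform in $k$ after rescaling, valid because $K' = K R_k^2 \to 0$) upgrade weak to strong convergence on $B_{3/4}$, hence on $B_{1/2}$, and in particular $\fint_{B_{1/2}^{X_k}}|\nabla u_k|^2 \to \fint_{B_{1/2}^C(O)}|\nabla u_\infty|^2$ and similarly on $B_1$. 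This is exactly the kind of statement developed in \cite{AmbHon} and \cite{GigLecNotes}; I would cite it rather than reprove it. A secondary technical point is making sure the additive-constant normalization is compatible with the pmGH convergence (Poincaré on $B_1$ controls $\|u_k - \fint u_k\|_{L^2}$ uniformly, so this is fine) and that $u_\infty$ is genuinely nonconstant — but this is automatic since $\fint_{B_{1/2}^C(O)}|\nabla u_\infty|^2 \ge 1 > 0$.
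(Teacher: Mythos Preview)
Your proposal is correct and follows essentially the same approach as the paper's proof: contradiction, rescaling to a tangent cone via \cref{lm:conebound}, normalization $(u_k-\fint u_k)/(\fint|\nabla u_k|^2)^{1/2}$, Poincar\'e for the $L^2$ bound, compactness and strong $W^{1,2}$ convergence from \cite{AmbHon} (the paper invokes Theorems~4.2 and~4.4 there directly rather than rederiving the upgrade via Caccioppoli/Bochner), contradiction with \cref{lem-quant-average-energy-estimate}, and the same dichotomy plus Bishop--Gromov doubling for \eqref{e:3}. The only cosmetic difference is that the paper fixes the sequence $1/i$ from the outset rather than speaking of ``adjusting $\delta_0$ later''.
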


\begin{proof}
The proof is  by contradiction. Let us assume that there exists a sequence of radii $R_i\downarrow 0$ and functions $u_i\in D(\Delta, B_{R_i}(x_0))$ such that 
\begin{equation*}
R_i^2\fint_{B_{R_i}(x_0)}\abs{\Delta u_i}^2 \, \mathrm{d}m \leq \frac{1}{i} \fint_{B_{R_i}(x_0)} \abs{\nabla u_i}^2\, \mathrm{d}m,
\end{equation*}
and
\[
\fint_{B_{R_i/2}(x_0)} \abs{\nabla u_i}^2 \,\mathrm{d}m \geq \Big(1-\frac{1}{i}\Big)\fint_{B_{R_i}(x_0)}\abs{\nabla u_i}^2\,\mathrm{d}m.
\]
We consider the scaled pointed spaces by $(X_i,d_i,m_i,x_i):=(X,d/R_i, m/R_i^N, x_0)$ and, up to subsequence, we asume that they converge to   a tangent cone of the form $(\mathrm{Con}_0^N(Y),d_c,m_c)$. In particular by  \cref{lm:conebound}, \(\diam (Y)\le \pi-\varepsilon_0\) for a suitable \(\varepsilon_0=\varepsilon_0(K,N,x_0)>0\). We consider the family of functions
\[
v_i:=\frac{u_i - \fint_{B_1^i(x_0)} u_i \, \mathrm{d}m_i}{\left(\fint_{B^i_1(x_0)} \abs{\nabla u_i}^2\, \mathrm{d} m_i\right)^{1/2}}, 
\]
where \(B_1^i(x_0)\) is the unit ball of \((X,d/R_i)\). Observe that the result holds trivially if the denominator in the definition of $v_i$ vanishes. Note that, by trivial scaling,  we have 
\[
\|\Delta v_i\|_{L^2(B_1^i(x_0))}\to 0\,,\qquad \fint_{B^i_1(x_0)} \abs{\nabla v_i}^2\, \mathrm{d} m_i=1\,,\qquad  \fint_{B_{1/2}^{i}(x_0)}\abs{\nabla v_i}^2\ge 1-\frac{1}{i}.
\]
By the local Poincar\'e inequality, \cite{RajalaPoincare}, the \(v_i\) are bounded in \(L^2\) and hence by \cite[Theorem 4.2]{AmbHon} they converge, up to subsequences,  to some \(v\in W^{1,2}(B^C_1(O))\) where \(B^C_1(O)\) is the unit ball of \(\mathrm{Con}_0^N(Y)\). Therefore it follows, again by  \cite[Theorem 4.2]{AmbHon}, that
\[
\Delta v=0\qquad \text{and}\qquad\fint_{B^C_1(O)} \abs{\nabla v}_c^2\, \mathrm{d} m_c\le 1.
\]
Furthermore,  by \cite[Theorem 4.4]{AmbHon} the \(v_i\) converges ``strongly'' in \(W^{1,2}\), hence:
\[
1\leq\lim_i \fint_{B_{1/2}^{i}(x_0)}\abs{\nabla v_i}^2= \fint_{B_{1/2}^{C}(O)}\abs{\nabla v}_c^2\mathrm{d} m_c.
\]
However, by  \cref{lem-quant-average-energy-estimate} and since \(\diam (Y)\le \pi-\varepsilon_0\),
\[
1\leq \fint_{B_{1/2}^{C}(O)}\abs{\nabla v}_c^2\mathrm{d} m_c\le \Bigl(\frac{1}{2}\Bigr)^{\delta_2}\fint_{B_{1}^{C}(O)}\abs{\nabla v}_c^2\mathrm{d} m_c\le \Bigl(\frac{1}{2}\Bigr)^{\delta_2}<1,
\]
where \(\delta_2=\delta_2(K,N,\varepsilon_0)\). This contradiction concludes the proof of \eqref{e:2}. To prove \eqref{e:3} we note that by the first part, it trivially holds  if \eqref{e:1} is  satisfied. Otherwise,  if \eqref{e:1} is not  satisfied we have 
\[
\fint_{B_{R/2}(x_0)} \abs{\nabla u}^2\, \mathrm{d}m\le \frac{m(B_R(x_0))}{m(B_{R/2}(x_0))} \fint_{B_{R}(x_0)} \abs{\nabla u}^2\, \mathrm{d}m \le \frac{m(B_R(x_0))}{m(B_{R/2}(x_0))\delta_0}R^2\fint_{B_R(x_0)}\abs{\Delta u}^2 \, \mathrm{d}m. 
\]
Inequality \eqref{e:3} now follows by combining the above inequality with the fact that 
\[
\frac{m(B_R(x_0))}{m(B_{R/2}(x_0))}\le C(R_0,N,K)
\]
by the local doubling property of \(\rcd{K}{N}\) spaces, \cite{Vil}.
\end{proof}

We can now prove \cref{thm-mwug-vanishes}:

\begin{proof}[Proof of  \cref{thm-mwug-vanishes}]
The H\"older inequality implies that 
\[
\fint_{B_{R}(x_0)} \abs{\Delta  u}^2\mathrm{d}m\le m(B_R(x_0))^{-2/p}\Bigl(\int   \abs{\Delta u}^p\mathrm{d}m\Big)^{\frac{2}{p}}.
\]
By the Bishop-Gromov inequality, \(m(B_R(x_0)) \ge C(\bar R,N,K)R^N\), which combined with the above inequality yields
\[
R^{2} \fint_{B_{R}(x_0)} \abs{\Delta u}^2\mathrm{d}m\le C R^{\frac{2(p-N)}{p}},
\]
where \(C\) is a constant which depends on \(\bar R\) and on the \(L^p\)-norm of \(\Delta u\) in \(B_{\bar R}(x_0)\). Combining this last inequality with \eqref{e:3} we get that for all \(R\le \min\{R_0, \bar R\}\),
\[
\fint_{B_{R/2}(x_0)} \abs{\nabla u}^2\le (1-\delta_0)\fint_{B_{R}(x_0)} \abs{\nabla u}^2+ C R^{\frac{2(p-N)}{p}}.
\]
Since \(p>N\), the latter inequality immediately implies that
\[
\limsup_{R\to 0} \fint_{B_{R}(x_0)} \abs{\nabla u}^2=0.
\]
\end{proof}

\section{Proofs of  \cref{cor:noharm} and \cref{cor:nocz}}
To prove  \cref{cor:noharm}  we will argue by contradiction:  if the desired sequence of spaces and functions does not exist, one would then deduce an a priori bound for all manifolds with curvature bounded from below by $0$. This estimate will be stable by (measure) Gromov-Hausdorff convergence, yielding that the modulus of the gradient of harmonic functions is continuous on all spaces arising as a limit. On the other hand, by  \cref{thm-mwug-vanishes} it has to vanish at sharp singular points. Since one can easily construct a limit space with a dense set of sharp singular points, this gives the desired contradiction, by choosing  a sequence converging to a non-constant limit. 

The proof of \cref{cor:nocz} goes along  the same lines, by noting that the constant in the  Sobolev-Morrey embedding only depends on the upper bound of the dimension and on a lower bound on the Ricci curvature. Hence,  an a priori \(L^p\)-bound for \(p>N\)  implies a uniform bound on the \(C^{0,\frac{p-N}{N}}\) norm of \(|\nabla u|\).

In order to make the proofs clearer, let us recall here a few known facts:
\begin{enumerate}[(a)]
\item\label{i:badseq}  The boundary of an open convex set \(C\subset \mathbb{R}^{N+1}\) is an Alexandrov space with non-negative curvature when endowed with its intrinsic distance, (see \cite[Theorem 10.2.6]{BuragoBuragoIvanov}, noting that the proof applies for any dimension with minimal modifications). Furthermore there exists a sequence of smooth manifolds \(X_k\) with non-negative sectional curvature such that \(X_k\) converges to \(X\) in the Gromov-Hausdorff sense (see for example the proof of \cite[Theorem 1]{AKP}).

\item If \((X,d)\) is an Alexandrov space of dimension \(N\), then \((X,d, \mathcal H^N)\) is a non-collapsed  \(\rcd{0}{N}\) space, \cite{DePGig, Pet}.

\item\label{i:badset} For every \(N\) it is possible to construct the  boundary of a convex  set \(C\subset \mathbb R^N\), such that  \(X=\partial C\) which has a dense set of sharp singular points,  \cite[Example (2)]{OtsuShioya}.

\end{enumerate}

\begin{proof}[Proof of \cref{cor:noharm}] First note that if the conclusion of the theorem fails, arguing by contradiction this would mean that there exists a modulus of continuity \(\omega=\omega_{D,N}\) for all Riemannian manifolds \((X, g)\) with \(\sec(X) \ge 0\), dimension less than or equal to \(N\) and diameter bounded by \(D\) such that for all \(P\in M\) and all harmonic functions \(f:B_1^X(P)\to \mathbb R\)
\[
 \bigl|| \nabla f|(p)|- |\nabla f|(q)\bigr|\le \omega(d_{g}(p,q))\norm{f}_{W^{1,2}(B_1^X(P))}\qquad \text{for all \(p,q \in B_{1/2}^X(P)\).}
\]
By scaling and covering, this implies that for all \(s\in (0,1)\) there exists a modulus of continuity  \(\omega_s=\omega_{s,N,D}\) such that for all harmonic functions \(f:B_1^X(P)\to \mathbb R\) 
\begin{equation}\label{e:modulusofcontinuity}
 \bigl|| \nabla f|(p)|- |\nabla f|(q)\bigr|\le \omega_s(d_{g}(p,q))\norm{f}_{W^{1,2}(B_1^X(P))}\qquad \text{for all \(p,q \in B_{1-s}^X(P)\).}
\end{equation}
We now consider  a space \(X\) as in \ref{i:badset} above and a converging sequence as in \ref{i:badseq}. We also consider a point \(P\in X\) and a non constant  \(W^{1,2}\)-harmonic function defined on \(B_1^X(P)\). By \cite[Corollary 4.12]{AmbHon}, for all \(R\in (0,1)\) there are harmonic functions \(f_k: B_{R}^{X_k}(P_K)\to \mathbb{R}\) converging to \(f\) in the Mosco sense. By \eqref{e:modulusofcontinuity} one easily deduces that \(|\nabla f|\) is then a continuous function on the interior of \(B_1^X\). On the other hand, by \cref{thm-mwug-vanishes},  \(|\nabla f|\) has to vanish at all sharp  singular points. Since this set is dense,  this implies that \(|\nabla f|=0\) and thus that \(f\) is constant, a contradiction. 
\end{proof}

\begin{proof}[Proof of \cref{cor:nocz}]  We only sketch the proof, being essentially  the same of \cref{cor:noharm}. Indeed, by the Morrey embedding theorem (which holds uniformly on the class of \(\rcd{0}{N}\) spaces, \cite[Theorem 9.2.14]{HeiKosShaTys}) the validity of an a priori bound of the form 
\[
\norm{\nabla^2 f}_p\le C \norm{\Delta f}_p+\norm{\nabla f}_{p}
\]
with a constant which depends only on the lower bound on the curvature and the upper bound on the  dimension and the diameter, would yield a uniform estimate of the form 
\[
 \bigl|| \nabla f|(p)|- |\nabla f|(q)\bigr|\le C \norm{\Delta f}_p+\norm{\nabla f}_{p} d^{\frac{p-N}{p}}(p,q)\qquad \text{for all \(p,q \in X\),}
\]
where \(X\) is \emph{any} smooth manifold with \(\sec(X)\ge 0\) and  \(C\) depends only on an upper bound of the dimension and of the diameter. By considering a space \(X\) as in \ref{i:badset} and an approximating sequence \(X_k\) as in \ref{i:badseq} we obtain a contradiction as in in the proof of  \cref{cor:noharm}. 
\end{proof}

\bibliographystyle{amsplain}

\end{document}